\newcommand{\A}{\mathbf{A}}
\newcommand{\M}{\mathbf{M}}
\theoremstyle{plain}
\newtheorem{thm}{Theorem}
\newtheorem{lemma}[thm]{Lemma}
\theoremstyle{definition}
\theoremstyle{remark}
\numberwithin{equation}{section}
\numberwithin{thm}{section}
\begin{document}
\title{Infinite matrix products and hypergeometric zeta series}
\author{T. Wakhare$^{1}$ and C. Vignat$^{2}$}
	\address{$^{1}$ Department of Electrical Engineering and Computer Science, Massachusetts Institute of Technology, Cambridge, Massachusetts, USA}
	\email{twakhare@mit.edu}
\address{$^{2}$ Department of Mathematics, Tulane University, New Orleans, Louisiana, USA}
	\email{cvignat@tulane.edu}

\maketitle

\begin{abstract}
An unpublished identity of Gosper restates a hypergeometric identity for odd zeta values in terms of an infinite product of matrices. We show that this correspondence runs much deeper, and show that many examples of WZ-accelerated series for zeta values lift to infinite matrix products. We also introduce a new matrix subgroup, the Gosper group, which all of our matrix products fall into.
\end{abstract}

\section{Introduction}

%
%
%
%

In his famous book ``Mathematical Constants'' \cite{Finch}, Finch cites
an unpublished result by Gosper \cite{Gosper}:
\begin{equation}
\prod_{k=1}^{\infty}\left[\begin{array}{cc}
-\frac{k}{2\left(2k+1\right)} & \frac{5}{4k^{2}}\\
0 & 1
\end{array}\right]=\left[\begin{array}{cc}
0 & \zeta\left(3\right)\\
0 & 1
\end{array}\right],\label{eq:product 2}
\end{equation}
and its $\left(N+1\right)\times\left(N+1\right)$ extension, for $N\geqslant2,$
\begin{equation}
\prod_{k=1}^{\infty}\left[\begin{array}{cccccc}
-\frac{k}{2\left(2k+1\right)} & \frac{1}{2k\left(2k+1\right)} & 0 & \dots & 0 & \frac{1}{k^{2N}}\\
0 & -\frac{k}{2\left(2k+1\right)} & \frac{1}{2k\left(2k+1\right)} & \dots &  & \frac{1}{k^{2N-2}}\\
\vdots & \vdots & \vdots &   & \vdots & \vdots\\
0 & 0 & 0 & \dots & \frac{1}{2k\left(2k+1\right)} & \frac{1}{k^{4}}\\
0 & 0 & 0 & \dots & -\frac{k}{2\left(2k+1\right)} & \frac{5}{4k^{2}}\\
0 & 0 & 0 & \dots & 0 & 1
\end{array}\right]=\left[\begin{array}{cccc}
0 & \dots & 0 & \zeta\left(2N+1\right)\\
0 & \dots & 0 & \zeta\left(2N-1\right)\\
\vdots &  & \vdots & \vdots\\
0 & \dots & 0 & \zeta\left(5\right)\\
0 & \dots & 0 & \zeta\left(3\right)\\
0 & \dots & 0 & 1
\end{array}\right].\label{eq:product n}
\end{equation}
We will show that this formula is in fact equivalent to Koecher's identity \cite[Eq. (3)]{Koecher}
\begin{equation}\label{koecher}
    \sum_{n=0}^\infty \frac{1}{n(n^2-x^2)} = \frac12 \sum_{k=1}^\infty \frac{(-1)^{k-1}}{\binom{2k}{k} k^3} \frac{5k^2-x^2}{k^2-x^2} \prod_{m=1}^{k-1} \left(.1-\frac{x^2}{m^2} \right).
\end{equation}
By extracting coefficients of $1$ and $x^2$ in Koecher's identity, we recover Markov's series acceleration identity \cite{Markov}
\[
\zeta\left(3\right)=\frac{5}{2}\sum_{n\geqslant1}\frac{\left(-1\right)^{n-1}}{n^{3}\binom{2n}{n}}
\]
and its higher order counterpart
\[
\zeta\left(5\right)=2\sum_{n=1}^{\infty}\frac{\left(-1\right)^{n-1}}{n^{5}\binom{2n}{n}}-\frac{5}{2}\sum_{n=1}^{\infty}\frac{\left(-1\right)^{n-1}H_{n-1}^{\left(2\right)}}{n^{3}\binom{2n}{n}}.
\]

These are efficiently encoded by the matrix product. By extracting other coefficients of $x^n$ in Koecher's identity, we recover counterparts for $\zeta(2n+1)$ which are again encoded by the matrix product.

This correspondence runs much deeper, and we will show that several hypergeometric-type series for the zeta function at small integers are equivalent to infinite products for $N \times N$ matrices. The fact that these identities support an expression in terms of matrix products is already interesting. The pattern of entries of some small matrices suggest the general form of the relevant $n \times n$ generalizations, which would then be equivalent to new accelerated series for zeta values.


\section{Background}
\subsection{Special Functions}
The \textit{Riemann zeta function}, absolutely convergent for $s\in\mathbb{C},\Re s >1$ is given by
\begin{equation}
    \zeta(s):= \sum_{n=1}^\infty \frac{1}{n^s}.
\end{equation}
This straightforwardly extends to the \textit{Hurwitz zeta function} with the addition of a parameter $z\in\mathbb{C},z \neq 0,-1,-2,\ldots$:
\begin{equation}
    \zeta(s|z) := \sum_{n=1}^\infty \frac{1}{(n+z)^s},
\end{equation}
so that $\zeta(s)=\zeta(s|1)$.

The \textit{harmonic numbers} are given by $H_0:=0$ 
and
\begin{equation}
    H_n:=\sum_{k=1}^n \frac{1}{k},\quad  n\geqslant 1.
\end{equation}
The \textit{hyper-harmonic numbers} are defined similar. 
We will also consider the \textit{elementary symmetric functions}
\begin{equation}\label{esym}
    e_\ell^{(s)}(k):= [t^\ell] \prod_{j=1}^{k-1} \left(1+\frac{t}{j^s}\right) = \sum_{1 \leqslant j_1 < j_2 < \cdots < j_\ell \leqslant k-1} \frac{1}{(j_1\cdots j_\ell)^s},
\end{equation}
which reduce to the harmonic numbers at $e_1^{1}(n) = H_{n-1}$.


\section{The Gosper Group}

Each Gosper matrix in the product (\ref{eq:product n}) has the form
\[
\M_{k}=\left[\begin{array}{cc}
\A_{k} & \mathbf{u}_{k}\\
\mathbf{0} & 1
\end{array}\right]
\]
where $\A_{k}$ is square $\left(N\times N\right)$, $\mathbf{u}_{k}$ is a
$\left(N\times1\right)$ vector and $\mathbf{0}$ is the $\left(1\times N\right)$ vector of zeros. Matrices of this kind form a group, which we shall name the \textbf{Gosper group}. With $\mathbf{I}_N$ the $(N\times N)$ identity matrix, the unit element of the group is $\left[\begin{array}{cc}
\mathbf{I}_{N} & \mathbf{0}\\
\mathbf{0} & 1
\end{array}\right]$, and the inverse of an element $\M=\left[\begin{array}{cc}
\A & \mathbf{u}\\
\mathbf{0} & 1
\end{array}\right]
$ is $\M^{-1}=\left[\begin{array}{cc}
\A^{-1} & -\A^{-1}\mathbf{u}\\
\mathbf{0} & 1
\end{array}\right]
$. Closure follows from $\M_{1}\M_{2}= \left[\begin{array}{cc}
\A_{1}\A_{2} & \A_{1}\mathbf{u}_{2}+\mathbf{u}_{1}\\
\mathbf{0} & 1
\end{array}\right].
$
We can inductively verify that
\[
\M_{1}\M_{2}\dots \M_{n}=\left[\begin{array}{cc}
\A_{1}\A_{2}\dots \A_n & \sum_{k=1}^{n}\A_{1}\dots \A_{k-1}\mathbf{u}_{k}\\
\mathbf{0} & 1
\end{array}\right].
\]

\subsection{Toeplitz Matrices}

Moreover, each $A_{k}$ in Gosper's identity has the simple form 
\[
\A_{k}=\alpha_{k}\mathbf{I}+\beta_{k}\mathbf{J}
\]
where $\mathbf{J}$ is the $\left(N\times N\right)$ matrix with a first superdiagonal
of ones. 

Hence $\mathbf{J}^{N}=0$ and, for $p\geqslant  N$, we have
\begin{align*}
\A_{1}\A_{2}\ldots \A_{p}  &=\left(\alpha_{1}\mathbf{I}+\beta_{1}\mathbf{J}\right)\left(\alpha_{2}\mathbf{I}+\beta_{2}\mathbf{J}\right)\ldots\left(\alpha_{p}\mathbf{I}+\beta_{p}\mathbf{J}\right)\\&=\left(\prod_{i=1}^{p}\alpha_{i}\right)\left(\mathbf{I}+\sum_{j=1}^{p}\frac{\beta_{j}}{\alpha_{j}}\mathbf{J}+\dots+\sum_{1\leqslant j_{1}<\dots<j_{N-1}\leqslant p}\frac{\beta_{j_{1}}\ldots\beta_{j_{N-1}}}{\alpha_{j_{1}}\ldots\alpha_{j_{N-1}}}\mathbf{J}^{N-1}\right).
\end{align*}
For $p<N$ the summation is instead truncated at $\mathbf{J}^p$.

The general form of the components of the limiting infinite product case can be deduced by induction.
\begin{lemma}\label{mainlem}
The components of 
\begin{equation}
\prod_{k=1}^{\infty}\left[\begin{array}{cc}
\A_k & \mathbf{u}_k\\
\mathbf{0} & 1
\end{array}\right]=\left[\begin{array}{cc}
\prod_{k=1}^\infty \A_k & \mathbf{v}_\infty\\
\mathbf{0} & 1
\end{array}\right],\label{eq:product 3}
\end{equation}
with
\[
\left[v_{\infty}^{\left(N\right)},\dots,v_{\infty}^{\left(1\right)}\right]^{T}:= \mathbf{v}_{\infty}=\sum_{p=1}^\infty\A_{1}\dots \A_{p-1}\mathbf{u}_{p},
\]
are
\begin{align*}
v_{\infty}^{\left(1\right)}&=\sum_{p=1}^\infty\left(\alpha_{1}\cdots\alpha_{p-1}\right)u_{p}^{\left(1\right)}, \\
v_{\infty}^{\left(2\right)}&=\sum_{p=1}^\infty\left(\alpha_{1}\cdots\alpha_{p-1}\right)\left(u_{p}^{\left(2\right)}+\left(\sum_{j=1}^{p-1}\frac{\beta_{j}}{\alpha_{j}}\right)u_{p}^{\left(1\right)}\right), \\
&\vdots \nonumber\\
v_{\infty}^{\left(\ell\right)}&=\sum_{p=1}^\infty\left(\alpha_{1}\cdots\alpha_{p-1}\right)\left(u_{p}^{\left(\ell\right)}+\left(\sum_{j=1}^{p-1}\frac{\beta_{j}}{\alpha_{j}}\right)u_{p}^{\left(\ell-1\right)}+\dots+\left(\sum_{1\leqslant j_{1}<\dots<j_{\ell-1}\leqslant p-1}\frac{\beta_{j_{1}}\dots\beta_{j_{\ell-1}}}{\alpha_{j_{1}}\dots\alpha_{j_{\ell-1}}}\right)u_{p}^{\left(1\right)}\right),
\end{align*}
with $1 \leqslant \ell \leqslant N$.
\end{lemma}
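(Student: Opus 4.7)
The plan is to obtain the lemma directly from the explicit expansion of $\A_1 \cdots \A_{p-1}$ that was already worked out for the Toeplitz matrices $\A_k = \alpha_k \mathbf{I} + \beta_k \mathbf{J}$, then apply that matrix to $\mathbf{u}_p$ and sum over $p$. The Gosper-group composition formula derived earlier tells us that $\mathbf{v}_\infty = \sum_{p=1}^\infty \A_1 \cdots \A_{p-1}\mathbf{u}_p$, so it is enough to read off the $\ell$th component of each term in this series.

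First, I would record the expansion
\[
\A_1 \cdots \A_{p-1} = \Bigl(\prod_{i=1}^{p-1}\alpha_i\Bigr)\sum_{m=0}^{N-1} \Bigl(\sum_{1 \leqslant j_1 < \cdots < j_m \leqslant p-1} \frac{\beta_{j_1}\cdots\beta_{j_m}}{\alpha_{j_1}\cdots\alpha_{j_m}}\Bigr) \mathbf{J}^m,
\]
with the convention that the inner sum is $1$ when $m=0$ and is empty (hence $0$) when $m > p-1$, and that $\mathbf{J}^m = 0$ for $m \geqslant N$. This is just the multinomial expansion used in the preceding display, truncated by nilpotence of $\mathbf{J}$.

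Next comes the bookkeeping step, which is the one real subtlety in the lemma: one must reconcile the reversed labelling $\mathbf{u}_p = [u_p^{(N)}, \ldots, u_p^{(1)}]^T$ with the action of $\mathbf{J}$. Since $\mathbf{J}$ has ones on the first superdiagonal, it sends the standard basis vector at position $i$ (counting from the top) to position $i-1$; with the convention that the entry at position $i$ from the top is labeled $\ell = N - i + 1$, one checks that
\[
(\mathbf{J}^m \mathbf{u}_p)^{(\ell)} = u_p^{(\ell - m)} \qquad \text{for } 1 \leqslant m \leqslant \ell - 1,
\]
and is zero for $m \geqslant \ell$. This is the main obstacle of the proof, in the sense that getting the indexing right is the only place one can easily make a mistake; the rest is mechanical.

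Combining these two steps, the $\ell$th component of $\A_1 \cdots \A_{p-1}\mathbf{u}_p$ is
\[
\Bigl(\prod_{i=1}^{p-1}\alpha_i\Bigr)\sum_{m=0}^{\ell-1}\Bigl(\sum_{1 \leqslant j_1 < \cdots < j_m \leqslant p-1} \frac{\beta_{j_1}\cdots\beta_{j_m}}{\alpha_{j_1}\cdots\alpha_{j_m}}\Bigr) u_p^{(\ell-m)},
\]
and summing over $p \geqslant 1$ gives the stated formula for $v_\infty^{(\ell)}$. If one prefers the inductive formulation hinted at in the text, the same identity can be established by induction on the number of factors using the closure formula $\M_1 \cdots \M_n \M_{n+1}$ together with the recursion $e_m^{\alpha,\beta}(p) = e_m^{\alpha,\beta}(p-1) + \tfrac{\beta_{p-1}}{\alpha_{p-1}} e_{m-1}^{\alpha,\beta}(p-1)$ for the elementary symmetric sums, but the direct expansion above seems cleaner.
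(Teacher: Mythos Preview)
Your proof is correct and follows exactly the route the paper indicates: the paper does not spell out a proof of this lemma beyond the remark that it ``can be deduced by induction'' from the Toeplitz expansion of $\A_1\cdots\A_{p}$ displayed just before the statement, and your argument is precisely that deduction carried out in full, including the indexing check for the reversed labelling of $\mathbf{u}_p$. The optional inductive variant you mention at the end is the same thing phrased recursively, so there is no genuine difference in approach.
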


Already the connection to zeta series and hyperharmonic numbers is clear: with the correct choice of $\alpha$ and $\beta$, the multiple sums will reduce to multiple zeta type functions.

These matrix products also exhibit a stability phenomenon, where increasing the dimension of the matrix does not impact any entries in $\mathbf{v}_\infty$ except the top right one, since mapping $N\to N+1$ only changes the formula for $v_\infty^{(N+1)}$.

We will consistently refer to the $N=1$ and $N=2$ cases. Explicitly, when $N=1$ so that both $A_k$ (denoted $\alpha_k$  to avoid confusion) and $u_k$ are scalars, we have 

\begin{lemma}
For $N=1,$
\begin{equation}
\prod_{k=1}^{n}\left[\begin{array}{cc}
\alpha_k & \beta_k\\
0 & 1
\end{array}\right]=\left[\begin{array}{cc}
\prod_{k=1}^n \alpha_k & 
\sum_{k=1}^n \alpha_1\dots \alpha_{k-1}\beta_k\\
0 & 1
\end{array}\right].
\end{equation}
\end{lemma}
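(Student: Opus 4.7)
The plan is to observe that this lemma is the $N=1$ specialization of the general Gosper-group product formula already established inductively in the text, namely
\[
\M_{1}\M_{2}\cdots \M_{n}=\left[\begin{array}{cc}
\A_{1}\A_{2}\cdots \A_{n} & \sum_{k=1}^{n}\A_{1}\cdots \A_{k-1}\mathbf{u}_{k}\\
\mathbf{0} & 1
\end{array}\right].
\]
When $N=1$, the block $\A_{k}$ collapses to the scalar $\alpha_{k}$ and $\mathbf{u}_{k}$ to the scalar $\beta_{k}$, so $\A_{1}\cdots \A_{n}=\prod_{k=1}^{n}\alpha_{k}$ and $\A_{1}\cdots \A_{k-1}\mathbf{u}_{k}=\alpha_{1}\cdots\alpha_{k-1}\beta_{k}$. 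Substituting gives exactly the formula in the lemma, so nothing new is required beyond a citation of the block-matrix identity.

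For readers who prefer a self-contained derivation, I would also include a quick direct induction on $n$. The base case $n=1$ is immediate. For the inductive step, assume the statement for $n-1$ and multiply on the right by $\left[\begin{smallmatrix}\alpha_{n} & \beta_{n}\\ 0 & 1\end{smallmatrix}\right]$; the $(1,1)$ entry becomes $\left(\prod_{k=1}^{n-1}\alpha_{k}\right)\alpha_{n}=\prod_{k=1}^{n}\alpha_{k}$, while the $(1,2)$ entry becomes
\[
\left(\prod_{k=1}^{n-1}\alpha_{k}\right)\beta_{n}+\sum_{k=1}^{n-1}\alpha_{1}\cdots\alpha_{k-1}\beta_{k}=\sum_{k=1}^{n}\alpha_{1}\cdots\alpha_{k-1}\beta_{k},
\]
which closes the induction. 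The bottom row is preserved because the product of two matrices with bottom row $\begin{bmatrix} 0 & 1\end{bmatrix}$ again has that bottom row. There is no real obstacle here: the statement is a routine corollary, included primarily to fix notation for the small-dimension cases that the paper will later analyze in detail.
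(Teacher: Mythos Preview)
Your proposal is correct and matches the paper's approach: the paper states this lemma without a separate proof, treating it as the $N=1$ instance of the general inductive formula $\M_{1}\cdots\M_{n}=\left[\begin{smallmatrix}\A_{1}\cdots\A_{n} & \sum_{k=1}^{n}\A_{1}\cdots\A_{k-1}\mathbf{u}_{k}\\ \mathbf{0} & 1\end{smallmatrix}\right]$, exactly as you describe. Your optional direct induction is a harmless and correct addition.
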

Although we will only need the $n\to \infty$ limit, let us note that this identity holds for finite $n$.

\section{Koecher' Identity}

\begin{thm}
Identity \eqref{eq:product 2} and Koecher's identity are equivalent.
\end{thm}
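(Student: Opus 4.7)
My plan is to exhibit both directions of the equivalence by factoring through Markov's acceleration identity
\[
\zeta(3) \;=\; \frac{5}{2}\sum_{k \geq 1}\frac{(-1)^{k-1}}{k^{3}\binom{2k}{k}}.
\]
Since \eqref{eq:product 2} is a single scalar statement with no free parameter, the theorem is to be read as asserting that \eqref{eq:product 2} is logically equivalent to the $x^{0}$ Taylor coefficient of Koecher's identity \eqref{koecher}; the higher $x^{2j}$ coefficients of Koecher will correspond (as the paper develops subsequently) to the larger matrix products \eqref{eq:product n}. In each of the two directions, the reduction is a reversible formal calculation, so I expect two invertible reductions rather than two independent arguments.

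To reduce \eqref{eq:product 2} to Markov, I apply the $N = 1$ case of Lemma \ref{mainlem} with $\alpha_{k} = -\frac{k}{2(2k+1)}$ and $\beta_{k} = \frac{5}{4k^{2}}$. The top-left entry of the limit is $\prod_{k \geq 1}\alpha_{k}$, which vanishes because $|\alpha_{k}| \leq 1/4$ for all $k$, and the top-right entry is $\sum_{k \geq 1}\alpha_{1}\cdots\alpha_{k-1}\beta_{k}$. Using $\prod_{j=1}^{k-1}(2j+1) = (2k-1)!/(2^{k-1}(k-1)!)$ and the definition of $\binom{2k}{k}$, one simplifies $\alpha_{1}\cdots\alpha_{k-1} = (-1)^{k-1}((k-1)!)^{2}/(2k-1)!$, and then $\alpha_{1}\cdots\alpha_{k-1}\beta_{k} = \frac{5(-1)^{k-1}}{2 k^{3}\binom{2k}{k}}$. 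Each step is an equality of rational expressions, so the reduction can be read in either direction; hence \eqref{eq:product 2} holds if and only if Markov's identity does.

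To reduce Koecher to Markov, I set $x = 0$ in \eqref{koecher}. The LHS collapses to $\sum_{n \geq 1} 1/n^{3} = \zeta(3)$; on the RHS the factor $(5k^{2} - x^{2})/(k^{2} - x^{2})$ evaluates to $5$ and the $x$-dependent product $\prod_{m=1}^{k-1}(1 - x^{2}/m^{2})$ evaluates to $1$, so the RHS equals $\frac{5}{2}\sum_{k \geq 1}(-1)^{k-1}/(k^{3}\binom{2k}{k})$, i.e., precisely Markov. Conversely, because \eqref{eq:product 2} has no $x$-content, its assertion is exhausted by this $x^{0}$ coefficient of \eqref{koecher}; the two equivalences chain to yield the theorem. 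The only technical obstacle is the factorial bookkeeping in the first reduction, which is routine; the one conceptual point is to parse ``equivalent'' as identifying the scalar content of \eqref{eq:product 2} with the scalar restriction of \eqref{koecher} at $x = 0$, and to note that both reductions are in fact formal equalities and hence bidirectional.
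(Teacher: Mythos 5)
Your computation of the $2\times 2$ case is correct and coincides with the $N=1$ slice of the paper's argument: the same $\alpha_k=-\tfrac{k}{2(2k+1)}$, the same telescoping evaluation $\alpha_1\cdots\alpha_{k-1}=(-1)^{k-1}\tfrac{((k-1)!)^2}{(2k-1)!}=(-1)^{k-1}\tfrac{2}{k\binom{2k}{k}}$, and the identification of the top-right entry of the limit with Markov's series $\tfrac52\sum_{k\geqslant 1}(-1)^{k-1}/(k^3\binom{2k}{k})$. The gap is in the scope of ``equivalent.'' Koecher's identity \eqref{koecher} is an identity of functions of $x$, hence an infinite family of coefficient identities of which Markov's series is only the $x^0$ member. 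What you actually establish is \eqref{eq:product 2} $\Leftrightarrow$ Markov together with Koecher $\Rightarrow$ Markov, and you then obtain the remaining implication only by redefining the theorem to be about the $x^0$ coefficient alone. That redefinition does not match what the paper proves, and the implication \eqref{eq:product 2} $\Rightarrow$ \eqref{koecher} genuinely fails as stated: from the scalar product \eqref{eq:product 2} one cannot recover, say, $\zeta(5)=2\sum_{n}(-1)^{n-1}/(n^5\binom{2n}{n})-\tfrac52\sum_{n}(-1)^{n-1}H_{n-1}^{(2)}/(n^3\binom{2n}{n})$, which is part of Koecher's content.

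The paper closes this gap by proving the statement for the full $(N+1)\times(N+1)$ product \eqref{eq:product n}: it extracts the coefficient of $x^{2n}$ from \eqref{koecher} for every $n$ (equation \eqref{koechercoef}) and matches the $\ell$-th component $v_\infty^{(\ell)}$ of Lemma \ref{mainlem} against it, using $\beta_j/\alpha_j=-1/j^2$ so that
\[
\sum_{1\leqslant j_1<\cdots<j_{\ell-1}\leqslant p-1}\frac{\beta_{j_1}\cdots\beta_{j_{\ell-1}}}{\alpha_{j_1}\cdots\alpha_{j_{\ell-1}}}=(-1)^{\ell}e_{\ell-1}^{(2)}(p).
\]
This general-$\ell$ bookkeeping with the elementary symmetric functions $e_{\ell}^{(2)}$, together with the off-diagonal entries $\beta_k=\tfrac{1}{2k(2k+1)}$ and the column $u_k^{(\ell)}=1/k^{2\ell+2}$, is precisely what makes the matrix product carry the same information as all Taylor coefficients of Koecher's identity, and it is absent from your write-up. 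To repair your proof you would need to carry out this computation for general $\ell$ (or at least state that the theorem is really about the family \eqref{eq:product n} and supply the induction on $\ell$), rather than restricting to the scalar case.
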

\begin{proof}
Begin with Koecher's identity \eqref{koecher}. By extracting coefficients of $x^{2n}$, in general we obtain 
\begin{equation}\label{koechercoef}
\zeta(2n+3) = \frac{5}{2}\sum_{k=1}^\infty\frac{\left(-1\right)^{k-1}}{k^{3}\binom{2k}{k}}(-1)^n e^{(2)}_n(k) + 2 \sum_{j=1}^n \sum_{k=1}^\infty \frac{(-1)^{k-1}}{k^{2j+3}\binom{2k}{k}} (-1)^{n-j} e_{n-j}^{(2)}(k).
\end{equation}

Take $\alpha_k = -\frac{k}{2(2k+1)}$, $\beta_k = \frac{1}{2k(2k+1)}$, $u_k^{(1)} = \frac{5}{4k^2}$, and $u_k^{(\ell)} = \frac{1}{k^{2\ell+2}}$ for $2\leqslant \ell \leqslant N$. This corresponds to the Gosper matrix
$$\left[\begin{array}{cc}
\A_k & \mathbf{u}_k\\
\mathbf{0} & 1
\end{array}\right]=\left[\begin{array}{cccccc}
-\frac{k}{2\left(2k+1\right)} & \frac{1}{2k\left(2k+1\right)} & 0 & \dots & 0 & \frac{1}{k^{2N}}\\
0 & -\frac{k}{2\left(2k+1\right)} & \frac{1}{2k\left(2k+1\right)} & \dots &  & \frac{1}{k^{2N-2}}\\
\vdots & \vdots & \vdots &   & \vdots & \vdots\\
0 & 0 & 0 & \dots & \frac{1}{2k\left(2k+1\right)} & \frac{1}{k^{4}}\\
0 & 0 & 0 & \dots & -\frac{k}{2\left(2k+1\right)} & \frac{5}{4k^{2}}\\
0 & 0 & 0 & \dots & 0 & 1
\end{array}\right].
$$

Then
$$ \prod_{i=1}^p \alpha_i = {(-1)^{p}}\prod_{i=1}^p\frac{i^2}{(2i)(2i+1)} = (-1)^{p} \frac{p!^2}{(2p+1)!},$$
and (for $2\leqslant \ell \leqslant N$)
$$\sum_{j_{1}<\dots<j_{\ell-1}\leqslant p-1}\frac{\beta_{j_{1}}\dots\beta_{j_{\ell-1}}}{\alpha_{j_{1}}\dots\alpha_{j_{\ell-1}}} = (-1)^{\ell}\sum_{j_{1}<\dots<j_{\ell-1}\leqslant p-1} \frac{1}{(j_1 \cdots j_{\ell-1})^2} = (-1)^{\ell} e_{\ell-1}^{(2)}(p). $$
We deduce
$$\lim_{p\to \infty} \alpha_1\cdots \alpha_p = 0,$$ while 
$$\lim_{p\to \infty} \sum_{j_{1}<\dots<j_{k}\leqslant p-1} \frac{1}{(j_1 \cdots j_{k})^2} \leqslant \lim_{p\to \infty} \sum_{j_1=1}^p \frac{1}{j_1^2} = \zeta(2).$$
Hence, applying Lemma \ref{mainlem}, we deduce 
$$\prod_{i=1}^\infty \A_i = \bf{0}.$$
The components in the right column are then explicitly given as 
\begin{align*}
v_{\infty}^{\left(\ell\right)}&=\sum_{p=1}^\infty\left(\alpha_{1}\cdots\alpha_{p-1}\right)\left(u_{p}^{\left(\ell\right)}+\left(\sum_{j=1}^{p-1}\frac{\beta_{j}}{\alpha_{j}}\right)u_{p}^{\left(\ell-1\right)}+\dots+\left(\sum_{1\leqslant j_{1}<\dots<j_{\ell-1}\leqslant p-1}\frac{\beta_{j_{1}}\dots\beta_{j_{\ell-1}}}{\alpha_{j_{1}}\dots\alpha_{j_{\ell-1}}}\right)u_{p}^{\left(1\right)}\right) \\
&= \sum_{p=1}^\infty \frac{(-1)^{p-1}(p-1)!^2}{(2p-1)!} \left( \frac{1}{p^{2\ell+2}} - \frac{e_1^{(2)}(p)}{p^{2\ell}} + \cdots + (-1)^{\ell-1}\frac{5}{4} \frac{e^{(2)}_{\ell-1}(p)}{p^2} \right) \\
&= \frac{5}{2}\sum_{p=1}^\infty \frac{(-1)^{p-1}}{p^3 \binom{2p}{p}}e_{\ell-1}^{(2)}(p) + 2\sum_{j=1}^{\ell-1}  \sum_{p=1}^\infty \frac{(-1)^{p-1}}{p^{3+2j} \binom{2p}{p}}e_{\ell-1-j}^{(2)}(p) (-1)^{\ell-1-j}.
\end{align*}
We see that this is exactly the formula from Koecher's identity, hence equals $\zeta(2\ell+1)$ for $1\leqslant \ell \leqslant N$. 
\end{proof}

\section{Leschiner's identity}
Begin with the Leschiner identity
\[
\sum_{n\geqslant1}\frac{\left(-1\right)^{n-1}}{n^{2}-z^2}=\frac{1}{2}\sum_{k\geqslant1}\frac{1}{\binom{2k}{k}k^{2}}\frac{3k^{2}+z^2}{k^{2}-z^2}\prod_{j=1}^{k-1}\left(1-\frac{z^2}{j^{2}}\right),
\]
so that
\[
\tilde{\zeta}\left(2\right)=\frac{3}{2}\sum_{k\geqslant1}\frac{1}{\binom{2k}{k}k^{2}},
\]
and
\[
\bar{\zeta}\left(4\right)=\frac{3}{2}\sum_{k\geqslant1}\frac{1}{\binom{2k}{k}k^{2}}\left[\frac{4}{k^{2}}-H_{k-1}^{\left(2\right)}\right],
\]
and in general (I think I made a mistake here)
$$
\tilde{\zeta}(2n+2) = \frac{3}{2}\sum_{k=1}^\infty\frac{1}{k^{2}\binom{2k}{k}}(-1)^n e^{(2)}_n(k) +6 \sum_{j=1}^n \sum_{k=1}^\infty \frac{1}{k^{2j+2}\binom{2k}{k}} (-1)^{n-j} e_{n-j}^{(2)}(k).
$$
A Gosper representation for $\bar{\zeta}\left(2\right)$ and $\bar{\zeta}\left(4\right)$
is
\[
\prod_{n\geqslant1}\left(\begin{array}{ccc}
\frac{n}{2\left(2n+1\right)} & \frac{-1}{2n\left(2n+1\right)} & \frac{1}{n^{3}}\\
0 & \frac{n}{2\left(2n+1\right)} & \frac{3}{4n}\\
0 & 0 & 1
\end{array}\right)=\left(\begin{array}{ccc}
0 & 0 & \bar{\zeta}\left(4\right)\\
0 & 0 & \bar{\zeta}\left(2\right)\\
0 & 0 & 1
\end{array}\right).
\]
This will generalize using the same method as Koecher.
\section{Borwein's Identity}
\subsection{the infinite product case}
Extracting coefficient of $z^{2n}$ from Borwein's identity \cite{Borwein}
\begin{equation}
\sum_{n\geqslant1}\frac{1}{n^{2}-z^2}=3\sum_{k\geqslant1}\frac{1}{\binom{2k}{k}}\frac{1}{k^{2}-z^2}\prod_{j=1}^{k-1}\frac{j^{2}-4z^2}{j^{2}-z^2}.\label{eq:Borwein}
\end{equation}
gives 
\begin{align*}
    \sum_{k\geqslant1}\frac{1}{\binom{2k}{k}}\frac{1}{k^{2}-z^2}\prod_{j=1}^{k-1}\frac{j^{2}-4z^2}{j^{2}-z^2} &= \sum_{k\geqslant 1} \frac{1}{k^2\binom{2k}{k}} \prod_{j=1}^{k-1} \left(1-\frac{4z^2}{j^2} \right) \prod_{j=1}^k \frac{1}{1-\frac{z^2}{j^2}} \\
    &= \sum_{k\geqslant 1} \frac{1}{k^2\binom{2k}{k}} \sum_{\ell \geqslant 0} z^{2\ell} 4^\ell e_\ell^{(2)}(k) \sum_{m\geqslant 0} z^{2m} h_{m}^{(2)}(k+1),
\end{align*}
where $h_m$ is the complete symmetric function. This gives us a formula for the coefficient of $z^{2n}$ as a convolution over $h_m$ and $e_m$. How do we encode this in the matrix, in terms of $\alpha_k,\beta_k, u_k$?

\begin{thm}
A Gosper representation for $\zeta\left(2\right)$ is obtained as
\[
\prod_{n\geqslant1}\left(\begin{array}{cc}
\frac{n}{2\left(2n+1\right)} & \frac{3}{2n}\\
0 & 1
\end{array}\right)=\left(\begin{array}{cc}
0 & \zeta\left(2\right)\\
0 & 1
\end{array}\right).
\]
\end{thm}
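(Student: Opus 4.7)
The plan is to apply the $N=1$ Lemma directly and recognize the resulting scalar series as the classical Apéry-type acceleration $\zeta(2)=3\sum_{k\geqslant 1}\frac{1}{k^2\binom{2k}{k}}$, which is exactly Borwein's identity \eqref{eq:Borwein} at $z=0$.

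First, I would set $\alpha_k=\frac{k}{2(2k+1)}$ and $\beta_k=\frac{3}{2k}$ and compute the finite telescoping product
\[
\prod_{k=1}^{n}\alpha_k \;=\; \prod_{k=1}^{n}\frac{k}{2(2k+1)} \;=\; \frac{(n!)^2}{(2n+1)!}.
\]
This tends to $0$ as $n\to\infty$, which will give the zero in the bottom-left $1\times 1$ block of the limit matrix. (Equivalently, $\frac{(n!)^2}{(2n+1)!}=\frac{1}{(2n+1)\binom{2n}{n}}\to 0$.)

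Next I would compute the partial-product coefficient in front of $\beta_k$. Using the same identity,
\[
\alpha_1\cdots\alpha_{k-1}\;=\;\frac{((k-1)!)^2}{(2k-1)!}\;=\;\frac{2}{k\binom{2k}{k}},
\]
so that
\[
\alpha_1\cdots\alpha_{k-1}\,\beta_k \;=\; \frac{2}{k\binom{2k}{k}}\cdot\frac{3}{2k} \;=\; \frac{3}{k^{2}\binom{2k}{k}}.
\]
Applying the $N=1$ Lemma and passing to the limit, the top-right entry of the infinite matrix product equals
\[
\sum_{k=1}^{\infty}\alpha_1\cdots\alpha_{k-1}\,\beta_k \;=\; 3\sum_{k=1}^{\infty}\frac{1}{k^{2}\binom{2k}{k}}.
\]

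The final step is identification: by Borwein's identity \eqref{eq:Borwein} evaluated at $z=0$, one has $\zeta(2)=3\sum_{k\geqslant 1}\frac{1}{k^{2}\binom{2k}{k}}$, which matches our computed top-right entry. Convergence of this series is geometric (since $\binom{2k}{k}\sim 4^k/\sqrt{\pi k}$), so there is no analytic subtlety and the matrix identity follows. The only nontrivial step is the explicit simplification of $\alpha_1\cdots\alpha_{k-1}$ to a closed form involving the central binomial coefficient; everything else is a direct application of the $N=1$ Lemma combined with a classical Apéry-style series.
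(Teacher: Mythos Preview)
Your proof is correct and follows essentially the same route as the paper: apply the $N=1$ lemma with $\alpha_k=\frac{k}{2(2k+1)}$, $\beta_k=\frac{3}{2k}$, simplify $\alpha_1\cdots\alpha_{k-1}=\frac{2}{k\binom{2k}{k}}$, and identify the resulting series $3\sum_{k\geqslant1}\frac{1}{k^2\binom{2k}{k}}$ with $\zeta(2)$ via the $z=0$ case of Borwein's identity. You add the explicit verification that $\prod_{k=1}^n\alpha_k=\frac{(n!)^2}{(2n+1)!}\to 0$, which the paper leaves implicit.
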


\begin{proof}
Identifying the constant term produces
\[
\zeta\left(2\right)=3\sum_{k\geqslant1}\frac{1}{\binom{2k}{k}k^{2}}.
\]
With $\alpha_{k}=\frac{k}{2\left(2k+1\right)}$ and $\beta_{k}=\frac{3}{2k},$
we have
\[
\sum_{n\geqslant1}\left(\prod_{k=1}^{n-1}\alpha_{k}\right)\beta_{n}=\frac{3}{2}\sum_{n\geqslant1}\frac{2}{n^{2}\binom{2n}{n}}=\zeta\left(2\right).
\]
\end{proof}
Identifying the linear term in (\ref{eq:Borwein}) produces
\[
\zeta\left(4\right)=3\sum_{k\geqslant1}\frac{1}{\binom{2k}{k}k^{2}}\left(\frac{1}{k^{2}}-3H_{k-1}^{\left(2\right)}\right).
\]

This suggests the following result.
\begin{thm}
A Gosper representation for $\zeta\left(2\right)$ and $\zeta\left(4\right)$
is obtained as
\[
\prod_{n\geqslant1}\left(\begin{array}{ccc}
\frac{n}{2\left(2n+1\right)} & \frac{-3}{2n\left(2n+1\right)} & \frac{3}{2n^{3}}\\
0 & \frac{n}{2\left(2n+1\right)} & \frac{3}{2n}\\
0 & 0 & 1
\end{array}\right)=\left(\begin{array}{ccc}
0 & 0 & \zeta\left(4\right)\\
0 & 0 & \zeta\left(2\right)\\
0 & 0 & 1
\end{array}\right).
\]
\end{thm}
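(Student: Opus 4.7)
The plan is to mirror the proof of the Koecher theorem: read off the parameters from the matrix, apply Lemma \ref{mainlem}, and match the two resulting series against the Borwein-derived formulas for $\zeta(2)$ and $\zeta(4)$ displayed above. Setting $\alpha_n = n/(2(2n+1))$, $\beta_n = -3/(2n(2n+1))$, $u_n^{(1)} = 3/(2n)$, and $u_n^{(2)} = 3/(2n^3)$ (with the convention that $v^{(1)}$ and $u^{(1)}$ are the bottom components), we are squarely inside the Gosper group with $\mathbf{J}^2 = 0$.

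First, I would compute the two auxiliary quantities driving Lemma \ref{mainlem}. The partial product telescopes to $\prod_{i=1}^{p-1}\alpha_i = (p-1)!^2/(2p-1)! = 2/(p\binom{2p}{p})$, and the ratio $\beta_j/\alpha_j = -3/j^2$ gives $\sum_{j=1}^{p-1}\beta_j/\alpha_j = -3\,H_{p-1}^{(2)}$. These are the only building blocks needed for both components of $\mathbf{v}_\infty$.

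Second, I would verify that the top-left $\A$-block vanishes in the limit. Since $\mathbf{J}^2 = 0$, one has $\prod_{i=1}^p \A_i = \bigl(\prod_{i=1}^p \alpha_i\bigr)\bigl(\mathbf{I} + \bigl(\sum_{j=1}^{p}\beta_j/\alpha_j\bigr)\mathbf{J}\bigr)$; the scalar prefactor decays geometrically (like $4^{-p}$), while $\bigl|\sum_{j=1}^{p}\beta_j/\alpha_j\bigr| \leqslant 3\zeta(2)$ remains bounded, so the whole block tends to $\mathbf{0}$ and Lemma \ref{mainlem} applies.

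Third, I would plug into Lemma \ref{mainlem}. The bottom entry collapses to $v_\infty^{(1)} = 3\sum_{p\geqslant 1}\frac{1}{p^2 \binom{2p}{p}} = \zeta(2)$, which is the theorem already proved. The top entry becomes
\[
v_\infty^{(2)} = \sum_{p=1}^\infty \frac{2}{p\binom{2p}{p}}\left(\frac{3}{2p^3} - 3H_{p-1}^{(2)}\cdot \frac{3}{2p}\right) = 3\sum_{p\geqslant 1}\frac{1}{p^2\binom{2p}{p}}\left(\frac{1}{p^2} - 3H_{p-1}^{(2)}\right),
\]
which is precisely the formula for $\zeta(4)$ obtained by extracting the coefficient of $z^2$ in Borwein's identity, recorded just above the statement. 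The only real obstacle is this final matching; since it has effectively been done in the preceding discussion, the rest of the proof is routine bookkeeping with Lemma \ref{mainlem}.
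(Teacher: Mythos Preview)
Your proposal is correct and follows essentially the same route as the paper: compute $\prod_{i=1}^{p-1}\alpha_i = 2/(p\binom{2p}{p})$ and $\sum_{j<p}\beta_j/\alpha_j = -3H_{p-1}^{(2)}$, then match the resulting expressions for $v_\infty^{(1)}$ and $v_\infty^{(2)}$ against the Borwein-derived series for $\zeta(2)$ and $\zeta(4)$. The only cosmetic difference is that the paper phrases the argument as solving for the unknown entries $u_i^{(1)}$ and $\gamma_j$, whereas you (more naturally, given that the matrix is already stated) verify them directly.
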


\begin{proof}
Denote
\[
M_{n}=\left(\begin{array}{ccc}
\delta_{n} & \gamma_{n} & u_{n}^{\left(1\right)}\\
0 & \delta_{n} & u_{n}^{\left(2\right)}\\
0 & 0 & 1
\end{array}\right)=\left(\begin{array}{cc}
A_{n} & \mathbf{u}_{n}\\
\mathbf{0} & 1
\end{array}\right)
\]
with $A_{n}=\left(\begin{array}{cc}
\delta_{n} & \gamma_{n}\\
0 & \delta_{n}
\end{array}\right)=\delta_{n}I+\gamma_{n}J$ and $\delta_{n}=\frac{2}{n\left(2n+1\right)}$ so that, with $I=\left[\begin{array}{cc}
1 & 0\\
0 & 1
\end{array}\right],J=\left[\begin{array}{cc}
0 & 1\\
0 & 0
\end{array}\right],\mathbf{u}_{n}=\left[\begin{array}{c}
u_{n}^{\left(1\right)}\\
u_{n}^{\left(2\right)}=\frac{3}{2n}
\end{array}\right],$
\[
A_{1}\dots A_{i-1}=\frac{2}{i\binom{2i}{i}}\left[I+J\sum_{j=1}^{i-1}\frac{\gamma_{j}}{\delta_{j}}\right].
\]
We know that
\[
M_{1}\dots M_{n}=\left(\begin{array}{cc}
A_{1}\dots A_{n} & \mathbf{v}_{n}\\
\mathbf{0} & 1
\end{array}\right)
\]
with 
\[
\mathbf{v}_{n}=\sum_{i=1}^{n}A_{1}\dots A_{i-1}\mathbf{u}_{i}
\]
so that
\begin{align*}
\mathbf{v}_{n} & =\sum_{i=1}^{n}\frac{2}{i\binom{2i}{i}}\left[\mathbf{u}_{i}+\sum_{j=1}^{i-1}\frac{\gamma_{j}}{\delta_{j}}\left[\begin{array}{c}
\frac{3}{2i}\\
0
\end{array}\right]\right]=\sum_{i=1}^{n}\frac{2}{i\binom{2i}{i}}\left[\left[\begin{array}{c}
u_{i}^{\left(1\right)}\\
\frac{3}{2i}
\end{array}\right]+\sum_{j=1}^{i-1}\frac{\gamma_{j}}{\delta_{j}}\left[\begin{array}{c}
\frac{3}{2i}\\
0
\end{array}\right]\right].\\
 & =\left[\begin{array}{c}
\sum_{i=1}^{n}\frac{2}{i\binom{2i}{i}}u_{i}^{\left(1\right)}+\sum_{j=1}^{i-1}\frac{\gamma_{j}}{\delta_{j}}\frac{3}{2i}\\
\sum_{i=1}^{n}\frac{2}{i\binom{2i}{i}}\frac{3}{2i}
\end{array}\right]
\end{align*}
This produces
\[
v_{\infty}^{\left(2\right)}=\zeta\left(2\right)=\sum_{i=1}^{\infty}\frac{3}{i^{2}\binom{2i}{i}}
\]
and
\[
v_{\infty}^{\left(1\right)}=\zeta\left(4\right)=\sum_{i=1}^{\infty}\frac{2}{i\binom{2i}{i}}u_{i}^{\left(1\right)}+\sum_{i=1}^{\infty}\frac{2}{i\binom{2i}{i}}\frac{3}{2i}\sum_{j=1}^{i-1}\frac{\gamma_{j}}{\delta_{j}}.
\]
Identifying with 
\[
\zeta\left(4\right)=3\sum_{k\geqslant1}\frac{1}{\binom{2k}{k}k^{2}}\left(\frac{1}{k^{2}}-3H_{k-1}^{\left(2\right)}\right)
\]
produces
\[
u_{i}^{\left(1\right)}=\frac{3}{2i^{3}},\thinspace\thinspace\gamma_{j}=\frac{-3}{2j\left(2j+1\right)}.
\]
\end{proof}
Unfortunately, the case that includes $\zeta\left(6\right)$ is not
as straightforward.
\begin{thm}
A Gosper representation for $\zeta\left(2\right),\thinspace\thinspace\zeta\left(4\right)$
and $\zeta\left(6\right)$ is obtained as
\[
\prod_{n\geqslant1}\left[\begin{array}{cccc}
\frac{n}{2\left(2n+1\right)} & -\frac{3}{2n\left(2n+1\right)} & 0 & \frac{3}{2n^{5}}-\frac{9H_{n-1}^{\left(4\right)}}{2n}\\
0 & \frac{n}{2\left(2n+1\right)} & -\frac{3}{2n\left(2n+1\right)} & \frac{3}{2n^{3}}\\
0 & 0 & \frac{n}{2\left(2n+1\right)} & \frac{3}{2n}\\
0 & 0 & 0 & 1
\end{array}\right]=\left[\begin{array}{cccc}
0 & 0 & 0 & \zeta\left(6\right)\\
0 & 0 & 0 & \zeta\left(4\right)\\
0 & 0 & 0 & \zeta\left(2\right)\\
0 & 0 & 0 & 1
\end{array}\right].
\]
For example, the truncated product from $n=1$ up to $n=200$ is 
\[
\left[\begin{array}{cccc}
\text{\ensuremath{2.4222.10^{-122}}} & \ensuremath{-1.1917.10^{-121}} & \text{\ensuremath{1.7517.10^{-121}}} & 1.01734\\
0. & \text{\ensuremath{2.4222.10^{-122}}} & \text{-\ensuremath{1.1917.10^{-121}}} & 1.08232\\
0. & 0. & \text{\ensuremath{2.4222.10^{-122}}} & 1.64493\\
0. & 0. & 0. & 1.
\end{array}\right].
\]
\end{thm}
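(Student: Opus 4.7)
The plan is to apply Lemma \ref{mainlem} with $N=3$ and then reconcile the resulting expression for $v_\infty^{(3)}$ with Borwein's identity \eqref{eq:Borwein}. Using the convention of Lemma \ref{mainlem} that the entries of $\mathbf{v}_\infty$ are indexed from the bottom, I would first identify
$$\alpha_n = \frac{n}{2(2n+1)}, \qquad \beta_n = -\frac{3}{2n(2n+1)}, \qquad u_n^{(1)} = \frac{3}{2n}, \qquad u_n^{(2)} = \frac{3}{2n^3}, \qquad u_n^{(3)} = \frac{3}{2n^5} - \frac{9 H_{n-1}^{(4)}}{2n}.$$
The ingredients
$$\prod_{i=1}^{p-1}\alpha_i = \frac{2}{p\binom{2p}{p}}, \qquad \frac{\beta_j}{\alpha_j} = -\frac{3}{j^2}, \qquad \sum_{j=1}^{p-1}\frac{\beta_j}{\alpha_j} = -3 H_{p-1}^{(2)}, \qquad \sum_{1\leqslant j_1<j_2 \leqslant p-1} \frac{\beta_{j_1}\beta_{j_2}}{\alpha_{j_1}\alpha_{j_2}} = 9\, e_2^{(2)}(p)$$
are inherited from the preceding theorem, and since $\prod_{i=1}^p \alpha_i$ decays superexponentially while the symmetric-function sums stay bounded by powers of $\zeta(2)$, the upper-left block $\prod_i \A_i$ vanishes in the limit.

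Lemma \ref{mainlem} then yields $v_\infty^{(1)} = \zeta(2)$ and $v_\infty^{(2)} = \zeta(4)$ exactly as before, so the new content is the identity
$$v_\infty^{(3)} = 3\sum_{p\geqslant 1}\frac{1}{p^2\binom{2p}{p}}\left[\frac{1}{p^4} - 3 H_{p-1}^{(4)} - \frac{3 H_{p-1}^{(2)}}{p^2} + 9\, e_2^{(2)}(p)\right] \;=\; \zeta(6).$$
The natural source for a matching closed form is the coefficient of $z^4$ on both sides of Borwein's identity \eqref{eq:Borwein}. Splitting
$$\prod_{j=1}^{k-1}\frac{j^2-4z^2}{j^2-z^2} \;=\; \prod_{j=1}^{k-1}\bigl(1-4z^2/j^2\bigr) \cdot \prod_{j=1}^{k-1}\bigl(1-z^2/j^2\bigr)^{-1},$$
the first factor expands into elementary symmetric functions $e_\ell^{(2)}(k)$, while the second produces the complete homogeneous symmetric functions $h_m^{(2)}(k-1)$, in particular $h_2^{(2)}(k-1) = \tfrac{1}{2}\bigl[(H_{k-1}^{(2)})^2 + H_{k-1}^{(4)}\bigr]$. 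Collecting all $z^4$ contributions from these two products together with $(k^2-z^2)^{-1}$ gives
$$\zeta(6) = 3\sum_k \frac{1}{k^2\binom{2k}{k}}\left[\frac{1}{k^4} - \frac{3 H_{k-1}^{(2)}}{k^2} + 16\, e_2^{(2)}(k) - \frac{7(H_{k-1}^{(2)})^2}{2} + \frac{H_{k-1}^{(4)}}{2}\right].$$

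The main obstacle is that this Borwein expansion carries a $(H_{k-1}^{(2)})^2$ term that does \emph{not} appear in the matrix-derived formula for $v_\infty^{(3)}$, which by construction contains only genuinely elementary symmetric functions. The reconciling step is the Newton-type identity
$$(H_{k-1}^{(2)})^2 = H_{k-1}^{(4)} + 2\, e_2^{(2)}(k),$$
which eliminates the squared harmonic number in favour of $H_{k-1}^{(4)}$ and $e_2^{(2)}(k)$. After substituting and regrouping, the coefficients of $\frac{1}{k^4}$, $H_{k-1}^{(2)}/k^2$, $H_{k-1}^{(4)}$ and $e_2^{(2)}(k)$ match term-by-term between the two sums. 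Conceptually, the extra $-9 H_{n-1}^{(4)}/(2n)$ correction in $u_n^{(3)}$ is precisely what compensates for the fact that Borwein's product mixes elementary and complete symmetric functions whereas the Toeplitz structure of $\A_n = \alpha_n\mathbf{I} + \beta_n \mathbf{J}$ alone can only generate elementary ones. Once this algebraic matching is verified, the identification $v_\infty^{(3)} = \zeta(6)$ is established and the theorem follows.
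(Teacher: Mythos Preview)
Your argument is correct and follows essentially the same route as the paper: extract the $z^{4}$ coefficient from Borwein's identity, compute $v_\infty^{(3)}$ from Lemma~\ref{mainlem}, and reconcile the two via the Newton identity $(H_{k-1}^{(2)})^{2}=H_{k-1}^{(4)}+2\,e_2^{(2)}(k)$. The paper writes the same symmetric-function relation in the equivalent packaged form $4H_{n-1}^{(2,2)}+\tfrac12 H_{n-1}^{(4)}-2(H_{n-1}^{(2)})^{2}=-\tfrac32 H_{n-1}^{(4)}$ (with $H_{n-1}^{(2,2)}=e_2^{(2)}(n)$), which is exactly your Newton step multiplied through; your explicit elementary/complete split of the Borwein product is a slightly more transparent derivation of the $z^{4}$ coefficient than the paper's stated formula, but the content is the same.
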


\begin{proof}
Identifying the coefficient of $z^{2}$ in Borwein's identity (\ref{eq:Borwein})
produces
\[
\zeta\left(6\right)=3\sum_{k\geqslant1}\frac{1}{\binom{2k}{k}k^{2}}\left[17H_{k-1}^{\left(2,2\right)}+H_{k-1}^{\left(4\right)}-4\left(H_{k-1}^{\left(2\right)}\right)^{2}-\frac{3H_{k-1}^{\left(2\right)}}{k^{2}}+\frac{1}{k^{4}}\right].
\]
Moreover, the vector $\mathbf{v}_{n}$ is computed as 
\[
\mathbf{v}_{n}=\sum_{i=1}^{n}A_{1}\dots A_{i-1}\mathbf{u}_{i}=\sum_{i=1}^{n}\frac{2}{\binom{2i}{i}i}\left[\mathbf{u}_{i}-3H_{i-1}^{\left(2\right)}\left[\begin{array}{c}
u_{i}\left(2\right)\\
u_{i}\left(3\right)\\
0
\end{array}\right]+9H_{i-1}^{\left(2,2\right)}\left[\begin{array}{c}
u_{i}\left(3\right)\\
0\\
0
\end{array}\right]\right].
\]
Hence
\begin{align*}
v_{\infty}^{\left(1\right)} & =\sum_{i=1}^{\infty}\frac{2}{\binom{2i}{i}i}\left[\mathbf{u}_{i}\left(1\right)-3H_{i-1}^{\left(2\right)}u_{i}\left(2\right)+9H_{i-1}^{\left(2,2\right)}u_{i}\left(3\right)\right]\\
 & =\sum_{i=1}^{\infty}\frac{2}{\binom{2i}{i}i}\left[\mathbf{u}_{i}\left(1\right)-3H_{i-1}^{\left(2\right)}\frac{3}{2i^{3}}+9H_{i-1}^{\left(2,2\right)}\frac{3}{2i}\right].
\end{align*}
Using 
\[
\frac{3}{n}\left(4H_{n-1}^{\left(2,2\right)}+\frac{1}{2}H_{n-1}^{\left(4\right)}-2\left(H_{n-1}^{\left(2\right)}\right)^{2}\right)=-\frac{9}{2n}H_{n-1}^{\left(4\right)}
\]
and identifying $v_{\infty}^{\left(1\right)}=\zeta\left(6\right)$
produces the result.
\end{proof}

\subsection{A finite matrix product for $H_{N}^{\left(3\right)}$}
From the identity
\[
H_{N}^{\left(3\right)}=\sum_{n=1}^{N}\frac{\left(-1\right)^{n-1}}{n^{3}\binom{2n}{n}}\left[\frac{5}{2}-\frac{1}{2\binom{N+n}{2n}}\right],
\]
we deduce the following finite product representation
\[
\prod_{n=1}^{N}\left[\begin{array}{cc}
-\dfrac{n}{2\left(2n+1\right)} & \dfrac{5}{4n^{2}}\left(1-\dfrac{1}{5\binom{N+n}{2n}}\right)\\
0 & 1
\end{array}\right]=\left[\begin{array}{cc}
\dfrac{2\left(-1\right)^{N}}{\left(N+1\right)\binom{2N+2}{N+1}} & H_{N}^{\left(3\right)}\\
0 & 1
\end{array}\right].
\]

\section{Gosper Representation of Markov's identity for $\zeta(2)$ and $\zeta(z+1,3)$}
\subsection{Markov's identity for $\zeta(z+1,3)$} Markov's identity reads
\begin{equation}
\zeta\left(z+1,3\right)=\sum_{n=1}^{\infty}\frac{1}{\left(n+z\right)^{3}}=\frac{1}{4}\sum_{k=1}^{\infty}\frac{\left(-1\right)^{k-1}\left(k-1\right)!^{6}}{\left(2k-1\right)!}\frac{5k^{2}+6kz+2z^{2}}{\left(\left(z+1\right)\left(z+2\right)\ldots\left(z+k\right)\right)^{4}}.
\label{Markov}\end{equation}
\begin{thm}
A Gosper's representation for Markov's identity is
\[
\prod_{n=1}^{\infty}\left[\begin{array}{cc}
-\dfrac{n^{6}}{2n\left(2n+1\right)\left(z+n+1\right)^{4}} & 5k^{2}+6kz+2z^{2}\\
0 & 1
\end{array}\right]=\left[\begin{array}{cc}
0 & 4\left(z+1\right)^{4}\zeta\left(z+1,3\right)\\
0 & 1
\end{array}\right]
\]
or equivalently
\[
\prod_{n=1}^{\infty}\left[\begin{array}{cc}
-\dfrac{n^{6}}{2n\left(2n+1\right)\left(z+n+1\right)^{4}} & \dfrac{5k^{2}+6kz+2z^{2}}{4\left(z+1\right)^{4}}\\
0 & 1
\end{array}\right]=\left[\begin{array}{cc}
0 & \zeta\left(z+1,3\right)\\
0 & 1
\end{array}\right].
\]
\end{thm}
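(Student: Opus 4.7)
The plan is to specialize the scalar $N=1$ case of Lemma \ref{mainlem}. With
\[
\alpha_{n} := -\frac{n^{6}}{2n(2n+1)(z+n+1)^{4}} = -\frac{n^{5}}{2(2n+1)(z+n+1)^{4}}, \qquad \beta_{n} := 5n^{2}+6nz+2z^{2}
\]
(reading the $k$ appearing in the $(1,2)$ entry of the theorem's matrix as the running product index $n$), the lemma identifies the limiting top-right entry with $\sum_{n\geqslant 1}\alpha_{1}\cdots\alpha_{n-1}\beta_{n}$ and the top-left with $\prod_{n\geqslant 1}\alpha_{n}$. So the proof reduces to a closed form for the partial $\alpha$-product plus a decay estimate for the full $\alpha$-product.

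For the closed form, I would combine the standard identity $\prod_{i=1}^{n-1}2(2i+1)=(2n-1)!/(n-1)!$ with the telescoping product $\prod_{i=1}^{n-1}(z+i+1)=(z+2)(z+3)\cdots(z+n)=\tfrac{(z+1)(z+2)\cdots(z+n)}{z+1}$ to get
\[
\alpha_{1}\cdots\alpha_{n-1}\;=\;(-1)^{n-1}\,\frac{((n-1)!)^{6}\,(z+1)^{4}}{(2n-1)!\,\bigl((z+1)(z+2)\cdots(z+n)\bigr)^{4}}.
\]
Multiplying by $\beta_{n}$ and summing over $n\geqslant 1$, the constant $(z+1)^{4}$ pulls out of the sum and what remains is exactly $4\zeta(z+1,3)$ by Markov's identity \eqref{Markov}. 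This delivers the claimed top-right entry $4(z+1)^{4}\zeta(z+1,3)$; the second, ``equivalent'' form of the theorem is obtained by dividing $\beta_{n}$ through by the constant $4(z+1)^{4}$, which only rescales that entry.

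For the top-left $0$, it remains to verify that $\prod_{n\geqslant 1}\alpha_{n}=0$. This is a quick Stirling estimate: each $\alpha_{n}\to -\tfrac{1}{4}$, and more precisely $|\alpha_{1}\cdots\alpha_{n}|$ equals a polynomial in $n$ times $1/\binom{2n+1}{n}$, so it decays like a polynomial times $4^{-n}$ and vanishes in the limit.

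The main obstacle is purely bookkeeping: keeping straight how $\prod_{i=1}^{n-1}(z+i+1)^{4}=((z+2)\cdots(z+n))^{4}$ produced by the matrix product differs from Markov's $((z+1)\cdots(z+n))^{4}$ by exactly the factor $(z+1)^{4}$, which is precisely what materializes as the $(z+1)^{4}$ prefactor on the right-hand side of the theorem. Conceptually the argument is the same scalar specialization used in the Koecher theorem already proved above, so no genuinely new technique is needed.
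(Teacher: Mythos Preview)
Your proposal is correct and follows essentially the same approach as the paper: both arguments hinge on computing the partial product $\alpha_{1}\cdots\alpha_{n-1}$ in closed form and matching it to the $n$-th coefficient in Markov's series \eqref{Markov}, with the $(z+1)^{4}$ bookkeeping handled identically. The only minor additions are that you run the identification in the verification direction (matrix $\to$ series) rather than the paper's derivation direction (series $\to$ matrix), and you explicitly justify the vanishing of the top-left entry, which the paper leaves implicit.
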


\begin{proof}
Rewrite Markov's identity as
\[
4\zeta\left(z+1,3\right)=\sum_{k\geqslant1}\frac{\left(-1\right)^{k-1}\left(k-1\right)!^{6}}{\left(2k-1\right)!}\frac{5k^{2}+6kz+2z^{2}}{\left(\left(z+1\right)\dots\left(z+k\right)\right)^{4}},
\]
define
\[
u_{k}=5k^{2}+6kz+2z^{2}
\]
and notice that writing
\[
4\zeta\left(z+1,3\right)=u_{1}+\alpha_{1}u_{2}+\alpha_{1}\alpha_{2}u_{3}+\dots
\]
requires that the coefficient of $u_{1}$ should be equal to $1;$
as it is equal to $\frac{1}{\left(z+1\right)^{4}},$ consider the
variation
\[
4\left(z+1\right)^{4}\zeta\left(z+1,3\right)=\sum_{k\geqslant1}\frac{\left(-1\right)^{k-1}\left(k-1\right)!^{6}}{\left(2k-1\right)!}\frac{\left(z+1\right)^{4}}{\left(\left(z+1\right)\dots\left(z+k\right)\right)^{4}}u_{k}
\]
which now satisfies this constraint. Then identifying
\[
\alpha_{1}\dots\alpha_{k-1}=\frac{\left(-1\right)^{k-1}\left(k-1\right)!^{6}}{\left(2k-1\right)!}\frac{\left(z+1\right)^{4}}{\left(\left(z+1\right)\dots\left(z+k\right)\right)^{4}}
\]
provides
\[
\alpha_{k}=\frac{-k^{6}}{2k\left(2k+1\right)\left(z+k+1\right)^{4}}.
\]
Notice that the constant term $\left(z+1\right)^{4}$ disappears from
$\alpha_{k}.$
\end{proof}
Another identity \cite{Tauraso} due to Tauraso is
\begin{equation}
\sum_{n\geqslant1}\frac{1}{n^{2}-an-b^{2}}=\sum_{k\geqslant1}\frac{3k-a}{\binom{2k}{k}k}\frac{1}{k^{2}-ak-b^{2}}\prod_{j=1}^{k-1}\frac{j^{2}-a^{2}-4b^{2}}{j^{2}-aj-b^{2}}.\label{eq:Tauraso}
\end{equation}

\begin{thm}
A Gosper's matrix representation for identity (\ref{eq:Tauraso})
is

\[
\prod_{n=1}^{\infty}\left[\begin{array}{cc}
\frac{k}{2\left(2k+1\right)}\frac{k^{2}-a^{2}-4b^{2}}{k^{2}-ak-b^{2}} & \frac{3k-a}{k^{2}-ak-b^{2}}\\
0 & 1
\end{array}\right]=\left[\begin{array}{cc}
0 & \sum_{n\geqslant1}\frac{2}{n^{2}-an-b^{2}}\\
0 & 1
\end{array}\right].
\]
Notice that
\[
\sum_{n\geqslant1}\frac{2}{n^{2}-an-b^{2}}=\frac{2}{\sqrt{a^{2}+4b^{2}}}\left[\psi\left(1-\frac{a}{2}+\frac{\sqrt{a^{2}+4b^{2}}}{2}\right)-\psi\left(1-\frac{a}{2}-\frac{\sqrt{a^{2}+4b^{2}}}{2}\right)\right].
\]
\end{thm}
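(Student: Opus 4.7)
The plan is to apply the $N=1$ specialization of Lemma~\ref{mainlem}, i.e. the formula
\[
\prod_{k=1}^{n}\left[\begin{array}{cc}
\alpha_k & \beta_k\\ 0 & 1
\end{array}\right]=\left[\begin{array}{cc}
\prod_{k=1}^n \alpha_k & \sum_{k=1}^n \alpha_1\cdots\alpha_{k-1}\beta_k\\ 0 & 1
\end{array}\right],
\]
and to choose $\alpha_k,\beta_k$ so that the sum in the upper right matches $2$ times the right hand side of Tauraso's identity~\eqref{eq:Tauraso}. This mirrors the strategy used for Markov's identity above.

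First, multiply Tauraso's identity by $2$ so that the left hand side becomes $\sum_{n\geqslant1}\tfrac{2}{n^2-an-b^2}$, which is the target value of $v_\infty^{(1)}$. The term depending only on the summation index $k$, namely $\tfrac{3k-a}{k^2-ak-b^2}$, is a natural candidate for $\beta_k$. The remaining factors split as a product over $j<k$, which should be produced by the partial products $\alpha_1\cdots\alpha_{k-1}$. This forces
\[
\beta_k=\frac{3k-a}{k^2-ak-b^2},\qquad
\alpha_k=\frac{k}{2(2k+1)}\cdot\frac{k^2-a^2-4b^2}{k^2-ak-b^2},
\]
which is exactly the content of the proposed matrix.

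The verification then reduces to two elementary calculations: (i) the factorial identity
\[
\prod_{j=1}^{k-1}\frac{j}{2(2j+1)}=\frac{(k-1)!^2}{(2k-1)!}=\frac{2}{k\binom{2k}{k}},
\]
already used implicitly in the Markov and Koecher computations, and (ii) the telescoping product $\prod_{j=1}^{k-1}\tfrac{j^2-a^2-4b^2}{j^2-aj-b^2}$, which is built into $\alpha_k$ by design. Combining (i) and (ii) yields
\[
\alpha_1\cdots\alpha_{k-1}\beta_k=\frac{2}{k\binom{2k}{k}}\cdot\frac{3k-a}{k^2-ak-b^2}\prod_{j=1}^{k-1}\frac{j^2-a^2-4b^2}{j^2-aj-b^2},
\]
so summing over $k\geqslant1$ and invoking Tauraso's identity (\ref{eq:Tauraso}) produces the target $\sum_{n\geqslant1}\tfrac{2}{n^2-an-b^2}$.

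It remains to check the convergence condition $\prod_{k=1}^\infty \alpha_k=0$ needed to pass to the infinite product. As $k\to\infty$, both rational factors satisfy $\tfrac{k}{2(2k+1)}\to \tfrac14$ and $\tfrac{k^2-a^2-4b^2}{k^2-ak-b^2}\to 1$, so $\alpha_k\to \tfrac14$, giving geometric decay of the partial products to zero. The main (minor) obstacle is not algebraic but arithmetic: one must restrict to parameters $a,b$ for which neither $k^2-ak-b^2$ nor $j^2-aj-b^2$ vanishes at a positive integer, so that the matrix entries are well defined; this is automatic for generic $(a,b)$ and matches the implicit hypothesis of Tauraso's identity. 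The auxiliary closed form $\sum_{n\geqslant1}\tfrac{2}{n^2-an-b^2}$ in terms of digamma functions then follows from the standard partial-fraction expansion $\sum_n\tfrac{1}{n-\rho}-\tfrac{1}{n}=-\psi(1-\rho)+\gamma$ applied to the two roots of $n^2-an-b^2$.
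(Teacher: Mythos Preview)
Your proof is correct and follows essentially the same approach as the paper: identify $\beta_k$ with the summand factor $\tfrac{3k-a}{k^2-ak-b^2}$, multiply Tauraso's identity by $2$ so that the $k=1$ term matches $\beta_1$, and solve for $\alpha_k$ from the ratio of consecutive partial products. Your additional checks (the explicit factorial identity $\prod_{j=1}^{k-1}\tfrac{j}{2(2j+1)}=\tfrac{2}{k\binom{2k}{k}}$, the geometric decay $\alpha_k\to\tfrac14$ ensuring $\prod\alpha_k\to0$, and the parameter restriction) are welcome details the paper omits.
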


\begin{proof}
Choose
\[
u_{k}=\frac{3k-a}{k^{2}-ak-b^{2}}.
\]
The first term in (\ref{eq:Tauraso}) is
\[
\frac{3-a}{2}\frac{1}{1-a-b^{2}}=\frac{1}{2}u_{1}
\]
so that we consider twice identity (\ref{eq:Tauraso}), and choose
\[
\alpha_{1}\dots\alpha_{k-1}=\frac{1}{\binom{2k}{k}k}\prod_{j=1}^{k-1}\frac{j^{2}-a^{2}-4b^{2}}{j^{2}-aj-b^{2}}
\]
so that
\[
\alpha_{k}=\frac{k^{2}-a^{2}-4b^{2}}{k^{2}-ak-b^{2}}\frac{k}{2\left(2k+1\right)}.
\]
\end{proof}
A quartic version reads
\[
\sum_{n\geqslant1}\frac{n}{n^{4}-a^{2}n^{2}-b^{4}}=\frac{1}{2}\sum_{k\geqslant1}\frac{\left(-1\right)^{k-1}}{\binom{2k}{k}k}\frac{5k^{2}-a^{2}}{k^{4}-a^{2}k^{2}-b^{4}}\prod_{j=1}^{k-1}\frac{\left(j^{2}-a^{2}\right)^{2}+4b^{4}}{j^{4}-a^{2}j^{2}-b^{4}}.
\]
The same approach as above produces
\[
\prod_{n=1}^{\infty}\left[\begin{array}{cc}
-\frac{k}{2\left(2k+1\right)}\frac{\left(k^{2}-a^{2}\right)^{2}+4b^{4}}{k^{4}-a^{2}k^{2}-b^{4}} & \frac{5k^{2}-a^{2}}{k^{4}-a^{2}k^{2}-b^{4}}\\
0 & 1
\end{array}\right]=\left[\begin{array}{cc}
0 & \sum_{n=1}\frac{4n}{n^{4}-a^{2}n^{2}-b^{4}}\\
0 & 1
\end{array}\right].
\]

Amdeberhan-Zeilberger's ultra-fast series representation \cite{Amdeberhan}
\[
\zeta\left(3\right)=\sum_{n\geqslant1}\left(-1\right)^{n-1}\frac{\left(n-1\right)!^{10}}{64\left(2n-1\right)!^{5}}\left(205n^{2}-160n+32\right)
\]
can be realized as
\[
\prod_{n=1}^{\infty}\left[\begin{array}{cc}
-\left(\frac{k}{2\left(2k+1\right)}\right)^{5} & 205k^{2}-160k+32\\
0 & 1
\end{array}\right]=\left[\begin{array}{cc}
0 & 64\zeta\left(3\right)\\
0 & 1
\end{array}\right]
\]
or equivalently
\[
\prod_{n=1}^{\infty}\left[\begin{array}{cc}
-\left(\frac{k}{2\left(2k+1\right)}\right)^{5} & \frac{205k^{2}-160k+32}{64}\\
0 & 1
\end{array}\right]=\left[\begin{array}{cc}
0 & \zeta\left(3\right)\\
0 & 1
\end{array}\right].
\]

The resemblance with (\ref{eq:product 3}) is interesting and suggests
the generalization
\[
\prod_{n=1}^{\infty}\left[\begin{array}{ccc}
-\left(\frac{k}{2\left(2k+1\right)}\right)^{5} & \left(\frac{1}{2k\left(2k+1\right)}\right)^{5} & P\left(k\right)\\
0 & -\left(\frac{k}{2\left(2k+1\right)}\right)^{5} & \frac{205k^{2}-160k+32}{64}\\
0 & 0 & 1
\end{array}\right]=\left[\begin{array}{ccc}
0 & 0 & \zeta\left(5\right)\\
0 & 0 & \zeta\left(3\right)\\
0 & 0 & 1
\end{array}\right]
\]
where $P\left(k\right)$ is to be determined.
Another fast representation due to Amdeberhan \cite{Ambdeberhan 2}
is
\[
\zeta\left(3\right)=\frac{1}{4}\sum_{n=1}^{\infty}\frac{\left(-1\right)^{n-1}\left(56n^{2}-32n+5\right)}{n^{3}\left(2n-1\right)^{2}\binom{3n}{n}\binom{2n}{n}}
\]
and produces
\[
\prod_{n=1}^{\infty}\left[\begin{array}{cc}
-\dfrac{k^{3}}{\left(3k+3\right)\left(3k+2\right)\left(3k+1\right)}\left(\dfrac{2k-1}{2k+1}\right)^{2} & \dfrac{56k^{2}-32k+5}{24}\\
0 & 1
\end{array}\right]=\left[\begin{array}{cc}
0 & \zeta\left(3\right)\\
0 & 1
\end{array}\right].
\]

\end{document}